\numberwithin{equation}{section}
\newtheorem{theorem}{Theorem}[section]
\newtheorem{lemma}[theorem]{Lemma}
\newtheorem{proposition}[theorem]{Proposition}
\newtheorem{corollary}[theorem]{Corollary}
\theoremstyle{definition}
\newtheorem{remark}[theorem]{Remark}
\newtheorem*{acknowledgments}{Acknowledgments}
\theoremstyle{remark}
\newcommand\mynote[1]{\marginpar{\ \\ \small \tt #1}}
\newcommand\bel[1]{{\mynote{#1}}\begin{equation}\label{#1}}
\newcommand\mylabel[1]{\label{#1}}
\newcommand{\ZZ}{\mathbb{Z}}
\newcommand{\CC}{\mathbb{C}}
\newcommand{\GG}{\mathbb{G}}
\newcommand  {\shF}     {\mathcal{F}}
\newcommand  {\shO}     {\mathcal{O}}
\newcommand  {\Aut}     {\operatorname{Aut}}
\newcommand  {\Br}      {\operatorname{Br}}
\newcommand  {\et}      {{\text{\rm \'{e}t}}}
\newcommand  {\fet}      {{\text{\rm f\'{e}t}}}
\newcommand  {\Pro}    {{\text{Pro}}}
\newcommand  {\Fun}    {{\text{Fun}}}
\newcommand  {\Hom}     {\operatorname{Hom}}
\renewcommand  {\ker }  {\operatorname{Ker}}
\newcommand  {\lra}     {\longrightarrow}
\newcommand  {\Pic}     {\operatorname{Pic}}
\newcommand  {\PGL}     {\operatorname{PGL}}
\newcommand  {\ra}      {\rightarrow}
\newcommand  {\Ra}      {\Rightarrow}
\newcommand  {\Spec}    {\operatorname{Spec}}
\def\mydate{\number\day\space\ifcase\month \or January\or February\or March\or
April\or May\or June\or July\or
August\or September\or October\or November\or December\fi \space\number\year}
\begin{document}

\title[ On schemes with trivial higher \'etale homotopy groups]
      { On schemes with trivial higher \'etale homotopy groups}

\author[Mohammed Moutand]{Mohammed Moutand}
\address{ Moulay Ismail University, 
 Department of mathematics, 
Faculty of sciences,   B.P. 11201 Zitoune,  Mekn\`es,  Morocco.}
\curraddr{}
\email{m.moutand@edu.umi.ac.ma}



\begin{abstract}

Let $(X,\bar x)$ be a pointed connected noetherian scheme. In this note,  we give characterizations  for the vanishing of the second \'etale homotopy group  $\pi^\et_2(X,\bar x)$ in terms of  splitting profinite-\'etale covers of $X$, and by means of universal   covering spaces of the   Artin-Mazur-Friedlander  \'etale homotopy type $Et(X)$.  In particular, this provides certain classes of schemes for which the Brauer map is surjective.
\end{abstract}

\maketitle

\section{Introduction}

Let $X$ be a  connected noetherian  scheme  with  a geometric base point $\bar x   \ra X$. Let   $\pi_2^\et(X, \bar x)$ be the  second \'etale homotopy group of $X$ as defined by Artin-Mazur \cite{Artin_Mazur:1969} and Friedlander \cite{Friedlander:1982} via the \'etale homotopy type $Et(X)$. In \cite{MTD} the author proved that if   $\pi_2^\et(X, \bar x)$ is trivial, then for   every locally constant constructible torsion \'etale sheaf  $\mathcal{F}$ on $X$,  and every class $\beta \in  H_{\et}^2(X,  \mathcal{F})$,  there exists a finite \'etale    cover $f: Y \rightarrow X$ such that $\beta_{|Y} = 0$ in  $H_{\et}^2(Y,  \shF_{|Y})$. When $\shF$ is the \'etale sheaf of $n$-th roots of unity $\mu_{n,X}$,  for  $n$ invertible in $X$, this gives in particular,  by a result of Gabber (Lemma \ref{galois}), a positive answer to  a  question raised by Grothendieck in  \cite{Grothendieck:1968} about the surjectivity of the Brauer map $\delta : \Br(X) \ra \Br'(X)$.  Where $\Br(X)$ is the Brauer group of Azumaya algebras on $X$, and $\Br'(X)$ is the cohomological Brauer group $ H_{\et}^2(X, \GG_{m,X})_{\rm tors}$.

In this note, we give characterizations for  the existence of the cover $f: Y \ra X$ in terms of  profinite-\'etale covers, which are defined by  Vakil and Wickelgren \cite{VW} as  inverse limits of finite \'etale covers of $X$, and by means of the universal covering space of the \'etale homotopy type $Et(X)$. When $X$ is geometrically unibranch, all conditions turn out to be equivalent. More precisely,  we prove the following main theorem.

\begin{theorem}\label{mainthm}
Let $X$ be a connected noetherian geometrically unibranch scheme with   a geometric base point   $\bar x \ra X$. Let $p: \tilde{Et}(X) \ra Et(X) $ be the  universal covering space of the \'etale homotopy type  $Et(X)$, and let $\hat f: \hat X \ra X$ be the pro-universal cover of X, that is $\hat{X}:= \varprojlim X_i$  is the inverse limit of  all finite \'etale  covers  $f_i: X_i \rightarrow X$ of $X$. The following statements  are equivalent
\begin{itemize}

\item [(i)] The second \'etale homotopy group $\pi^\et_2(X,\bar x)$ is trivial.

\item [(ii)] For  every locally constant constructible torsion \'etale sheaf  $\mathcal{F}$ on $X$  we have $$H_{\et}^2(X,  \mathcal{F}) \simeq   H^2(\pi_1^\et(X, \bar x),  \mathcal{F}_{\bar x}).$$

\item [(iii)] For every locally constant constructible torsion \'etale sheaf  $\mathcal{F}$ on $X$,  and every class $\beta \in  H_{\et}^2(X,  \mathcal{F})$   there exists an \'etale Galois   cover $g: Y \rightarrow X$ such that $\beta_{|Y} = 0$ in  $H_{\et}^2(Y,  \shF_{|Y})$.

\item [(iv)] For every locally constant constructible torsion \'etale sheaf  $\mathcal{F}$ on $X$,  and every class $\beta \in  H_{\et}^2(X,  \mathcal{F})$   there exists a finite \'etale    cover $f: Y \rightarrow X$ such that $\beta_{|Y} = 0$ in  $H_{\et}^2(Y,  \shF_{|Y})$.

\item [(v)] For  every locally constant constructible  \'etale sheaf  $\mathcal{F}$ on $X$,  the natural map 
$$H_\et^2(X,\shF) =  H^2(Et(X),\shF)   \ra  H^2(\tilde{Et}(X),p^*\shF).$$
is zero.

\item [(vi)]  For  every locally constant constructible torsion \'etale sheaf  $\mathcal{F}$ on $X$, $H_\et^2(\hat X,\shF_{|\hat X})=0$.


\item [(vii)] 
 For every locally constant constructible torsion \'etale sheaf  $\mathcal{F}$ on $X$,  and every class $\beta \in  H_{\et}^2(X,  \mathcal{F})$   there exists a  profinite-\'etale   cover $h: Y \rightarrow X$ such that $\beta_{|Y} = 0$ in  $H_{\et}^2(Y,  \shF_{|Y})$.

\end{itemize}
\end{theorem}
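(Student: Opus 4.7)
The plan is to establish the equivalences through the cycle $(\mathrm{i}) \Rightarrow (\mathrm{iv}) \Rightarrow (\mathrm{vi}) \Rightarrow (\mathrm{i})$, supplemented by a Cartan--Leray spectral sequence analysis producing $(\mathrm{i}) \Leftrightarrow (\mathrm{ii}) \Leftrightarrow (\mathrm{v})$, and by standard cover-theoretic manipulations identifying $(\mathrm{iii})$, $(\mathrm{iv})$, $(\mathrm{vii})$. The main obstacle will be the closing implication $(\mathrm{vi}) \Rightarrow (\mathrm{i})$, which is precisely where the geometrically unibranch hypothesis enters, via the identification $Et(\hat X) \iso \tilde{Et}(X)$.

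First I would deploy the Cartan--Leray spectral sequence for the covering $p: \tilde{Et}(X) \to Et(X)$ with structure group $\pi_1^\et(X, \bar x)$,
$$E_2^{s,t} = H^s(\pi_1^\et(X, \bar x), H^t(\tilde{Et}(X), p^*\shF)) \Longrightarrow H^{s+t}_\et(X, \shF).$$
Since $\tilde{Et}(X)$ is simply connected, $E_2^{*,1}=0$, and the filtration on $H^2$ unravels to a short exact sequence
$$0 \to H^2(\pi_1^\et, \shF_{\bar x}) \to H^2_\et(X, \shF) \to \ker(d_3) \to 0,$$
with $\ker(d_3) \hookrightarrow H^2(\tilde{Et}(X), p^*\shF)^{\pi_1^\et}$ for $d_3\colon E_3^{0,2}\to E_3^{3,0}$. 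Both $(\mathrm{ii})$ and $(\mathrm{v})$ translate to the vanishing of $\ker(d_3)$ for every $\shF$, giving $(\mathrm{ii}) \Leftrightarrow (\mathrm{v})$. Moreover if $\pi_2^\et(X,\bar x) = 0$, then $\tilde{Et}(X)$ is $2$-connected and $H^2(\tilde{Et}(X), p^*\shF)$ vanishes for every local system, establishing $(\mathrm{i}) \Rightarrow (\mathrm{v})$.

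Next I would handle the cover-theoretic block. The implications $(\mathrm{iii}) \Rightarrow (\mathrm{iv}) \Rightarrow (\mathrm{vii})$ are formal; $(\mathrm{iv}) \Rightarrow (\mathrm{iii})$ follows by passage to a Galois closure, and $(\mathrm{vii}) \Rightarrow (\mathrm{iv})$ from continuity of étale cohomology: writing a profinite-étale cover as $Y = \varprojlim Y_i$ with $Y_i \to X$ finite étale, $H^2_\et(Y, \shF_{|Y}) = \varinjlim H^2_\et(Y_i, \shF_{|Y_i})$, so $\beta_{|Y}=0$ is already attained at some $Y_i$. For $(\mathrm{ii}) \Rightarrow (\mathrm{iii})$, since $\shF_{\bar x}$ is finite the class $\beta$, viewed via $(\mathrm{ii})$ in $H^2(\pi_1^\et, \shF_{\bar x})$, inflates from $H^2(\pi_1^\et/N, \shF_{\bar x})$ for some open normal $N$; the Hochschild--Serre inflation--restriction principle then says $\beta$ restricts to zero on $N$, i.e. $\beta_{|Y}=0$ for the Galois cover $Y$ associated to $N$. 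For $(\mathrm{iv}) \Rightarrow (\mathrm{vi})$, take $\beta \in H^2_\et(\hat X, \shF_{|\hat X})$; by continuity $\beta$ lifts to $\beta_i \in H^2_\et(X_i, \shF_{|X_i})$ for some finite étale $\pi_i : X_i \to X$, and since finite étale push-forward preserves local constancy, the Leray identification $H^2_\et(X_i, \shF_{|X_i}) \iso H^2_\et(X, \pi_{i,*}\shF_{|X_i})$ transports $\beta_i$ to a class $\gamma_i$ on $X$. Applying $(\mathrm{iv})$ to $\gamma_i$ yields a finite étale $W \to X$ with $\gamma_{i|W}=0$; base change trivializes $\beta_i$ on $W \times_X X_i$, which is a finite étale cover of $X$ and so $\hat X$ factors through it by the very definition of $\hat X$, giving $\beta=0$.

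Finally, $(\mathrm{vi}) \Rightarrow (\mathrm{i})$ exploits geometric unibranchness. Under this hypothesis, the profinite comparison of Artin--Mazur and Friedlander identifies $Et(\hat X) \iso \tilde{Et}(X)$, whence $H^2_\et(\hat X, \shF_{|\hat X}) \iso H^2(\tilde{Et}(X), p^*\shF)$. Since $\hat X$ is simply connected, any finite abelian group $A$ arises as $\shF_{\bar x}$ by taking $\shF = \underline A$ the constant sheaf on $X$, and the profinite Hurewicz isomorphism together with the universal coefficient theorem gives
$$H^2(\tilde{Et}(X), \underline A) \iso \Hom(\pi_2(\tilde{Et}(X)), A) = \Hom(\pi_2^\et(X, \bar x), A).$$
Vanishing of this group for every finite $A$, as $(\mathrm{vi})$ supplies, forces the profinite group $\pi_2^\et(X, \bar x)$ to be trivial. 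This closes the cycle, together with $(\mathrm{i}) \Rightarrow (\mathrm{iv})$ taken from \cite{MTD}. The principal difficulty throughout is the profinite-homotopy-theoretic identification $Et(\hat X) \iso \tilde{Et}(X)$ and the careful handling of the profinite Hurewicz/UCT comparison; the geometrically unibranch hypothesis is precisely what excludes the pathological local behavior that could otherwise obstruct it.
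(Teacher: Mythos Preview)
Your argument is correct and uses the same core tools as the paper (the Cartan--Leray spectral sequence for $p:\tilde{Et}(X)\to Et(X)$, continuity of \'etale cohomology, the identification $Et(\hat X)\simeq\tilde{Et}(X)$ from Proposition~\ref{univetal}, and profinite Hurewicz/UCT), but the organization is genuinely different. The paper runs the cycle $(\mathrm{i})\Rightarrow(\mathrm{ii})\Rightarrow(\mathrm{iii})\Rightarrow(\mathrm{iv})\Rightarrow(\mathrm{v})\Rightarrow(\mathrm{i})$ and then attaches $(\mathrm{vi})$ and $(\mathrm{vii})$ via separate implications $(\mathrm{i})\Rightarrow(\mathrm{vi})\Rightarrow(\mathrm{ii})$ and $(\mathrm{vi})\Rightarrow(\mathrm{vii})\Rightarrow(\mathrm{ii})$; in particular its $(\mathrm{vii})\Rightarrow(\mathrm{ii})$ passes through the diagram~\eqref{digrm2} and hence through Proposition~\ref{univetal}, so the geometrically unibranch hypothesis is invoked twice. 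Your route confines that hypothesis to the single step $(\mathrm{vi})\Rightarrow(\mathrm{i})$, because your $(\mathrm{vii})\Rightarrow(\mathrm{iv})$ via continuity is elementary and needs nothing about $\tilde{Et}(X)$. Two further local differences: your $(\mathrm{iv})\Rightarrow(\mathrm{vi})$ goes through finite \'etale push-forward $\pi_{i,*}$ and base change, whereas the paper does $(\mathrm{i})\Rightarrow(\mathrm{vi})$ using Corollary~\ref{0tran} (invariance of $\pi_2^\et$ under finite \'etale covers); and your direct extraction of $(\mathrm{ii})\Leftrightarrow(\mathrm{v})$ from the five-term sequence replaces the paper's separate $(\mathrm{iv})\Rightarrow(\mathrm{ii})$ diagram chase and the limit Hochschild--Serre argument \eqref{hspctral} for $(\mathrm{vi})\Rightarrow(\mathrm{ii})$. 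The net effect is a slightly leaner logical skeleton that makes more transparent exactly where unibranchness is indispensable.
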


This is an algebraic analogous of well known topological statements, and  which  combines  some  related  results stated in   \cite{Ach17} and \cite{MTD} within   a  general setting. The geometrically unibranch assumption will only be used to prove the two implications (v)$\Rightarrow$(i)  and (vii)$\Rightarrow$(ii). It will be specifically required to interpret (using a $\infty$-categorical approach) the universal covering space of $Et(X)$  in terms of the \'etale homotopy type $Et(\hat X)$ of $\hat X$ (Proposition \ref{univetal}).

In addition to a  recent example of  Demarche and Szamuely  \cite{DS} for algebraic groups, to  which Theorem \ref{mainthm} could apply, we recover  some  results for smooth curves, abelian varieties, and  for the  class of Schr\"oer  spaces as defined by the author in  \cite{MTD}.

Using an homological  interpretation  of $\pi_2^\et(X,\bar x)$ by Pal \cite{PAL}, along with a recent result of  Lara-Srinivas-Stix  \cite{LSS}  on the topology of the \'etale fundamental group $\pi_1^\et(X,\bar x)$, we can prove the statement of Theorem \ref{mainthm} for proper schemes (see end of the last section).

\section{Preliminaries}
We shall introduce some preliminary notations in preparation for the proving of the main result of this
paper. 
\subsection{Etale homotopy type}

Following Artin-Mazur \cite{Artin_Mazur:1969} and Friedlander \cite{Friedlander:1982}, we consider the \'etale  homotopy type functor 
$$Et: Sch \lra Pro-Ho(Ssets)$$
which associates to any  locally noetherian scheme $X$, a pro-space $Et(X)$  in $Pro-Ho(Ssets)$ the pro-category of the homotopy category of simplicial sets, where the corresponding  cofiltered  category is $Ho(Hyp)$ the  homotopy category of rigid \'etale hypercoverings of $X$. 

For any abelian group $A$, we have a canonical isomorphism \cite[Corollary 9.3]{Artin_Mazur:1969} 
$$
H^n(Et(X),  A)  = H_{\et}^n(X, A).
$$

A given   geometric point $\bar x \ra X$ defines a point $ \bar x_{\et}$ on $Et(X)$,  hence one can define the \'etale homotopy pro-groups  for all $n \geq 0$:
$$
\pi_n^\et(X, \bar{x}):= \pi_n(Et(X),  \bar x_{\et}).
$$

In particular, by \cite[Corollary 10.7]{Artin_Mazur:1969}  $\pi_1^\et(X, \bar{x})$ is the usual Grothendieck \'etale fundamental group.

\subsection{Remarks on locally constant constructible sheaves}


Let $\shF$ be a locally constant constructible \'etale  sheaf on $X$. By definition, $\shF$ has finite stalks and   there exists a covering $(Y_i \ra X)_{i\in I}$ in the \'etale topology such that for every $i \in I$, $\shF_{|Y_i}$ is constant. We have then the two following known results (see \cite[Chapter V, pages 155-156]{MIL}).

\begin{proposition}\label{reps}
Any   locally constant constructible \'etale  sheaf $\shF$  on a scheme $X$   is represented by a  finite \'etale group scheme $\tilde \shF$ over $X$, and there is a finite \'etale cover $f: Y \ra X$ such that $\shF_{|Y}$ is constant. 

\end{proposition}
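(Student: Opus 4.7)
The plan is to reduce the statement to the main theorem of Grothendieck's Galois theory of finite \'etale covers, which provides an equivalence of categories between finite \'etale $X$-schemes and finite continuous $\pi_1^\et(X,\bar x)$-sets (for $X$ connected with base point $\bar x$), sending $Y\to X$ to its geometric fibre $Y_{\bar x}$ with its monodromy action. Passing to group objects on both sides yields an analogous equivalence between finite \'etale $X$-group schemes and finite continuous $\pi_1^\et(X,\bar x)$-groups. My strategy is then to identify locally constant constructible \'etale sheaves with such $\pi_1^\et(X,\bar x)$-objects and read off both claims from this dictionary.

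First I would produce the representing group scheme $\tilde\shF$. Since $\shF$ is locally constant and constructible, the stalk $F := \shF_{\bar x}$ is finite, and the monodromy action of $\pi_1^\et(X,\bar x)$ on $F$ is continuous because $\shF$ becomes constant after pullback along some \'etale covering of $X$. When $\shF$ is a sheaf of groups, this action is by group automorphisms, and Galois theory furnishes a finite \'etale $X$-group scheme $\tilde\shF$ whose geometric fibre at $\bar x$ recovers $F$ equivariantly. A Yoneda argument on the small \'etale site then yields $\shF(U)\iso\Hom_X(U,\tilde\shF)$ for every \'etale $U\to X$: both functors are locally constant with the same stalks and the same monodromy, and the natural comparison map is an isomorphism on stalks.

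Next I would construct the trivializing finite \'etale cover. Since $F$ is finite, $\Aut(F)$ is finite, so the kernel $K$ of the monodromy representation $\rho\colon\pi_1^\et(X,\bar x)\to\Aut(F)$ is an open normal subgroup of finite index. By Galois theory, $K$ corresponds to a finite \'etale Galois cover $f\colon Y\to X$ whose fundamental group $\pi_1^\et(Y,\bar y)$ is contained in $K$. The pullback $\shF_{|Y}$ then has trivial monodromy, and is therefore the constant sheaf with value $F$.

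The main obstacle is bookkeeping rather than computation: one must carefully verify that group-object structures transport faithfully across the equivalence with $\pi_1^\et(X,\bar x)$-sets, and that the passage between a sheaf and its representing scheme is compatible with the group law, unit, and inversion. Both points are essentially Yoneda combined with the fact that finite \'etale morphisms form a stack on the \'etale site, so no genuinely new verification is needed beyond what Galois theory already supplies.
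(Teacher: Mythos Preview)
The paper does not actually prove this proposition: it is stated as a known fact with a citation to Milne's \emph{\'Etale Cohomology}, Chapter~V, pages 155--156, and no argument is given. Your proposal supplies precisely the standard argument one finds in that reference (and in SGA): use the equivalence, via stalk-plus-monodromy, between locally constant constructible sheaves and finite continuous $\pi_1^\et(X,\bar x)$-sets, pass to group objects, and trivialize by pulling back along the Galois cover corresponding to the kernel of the monodromy representation. So your approach is correct and is essentially the textbook proof the paper is citing.

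One small point worth flagging: your argument tacitly assumes $X$ is connected (so that $\pi_1^\et(X,\bar x)$ and Grothendieck's Galois correspondence are available), whereas the proposition as stated does not impose this. In the paper's setting this is harmless, since everything is applied to connected noetherian schemes; for the general statement one simply works componentwise. Also be aware that the equivalence ``locally constant constructible sheaves $\leftrightarrow$ finite $\pi_1^\et$-sets'' is very close in content to the proposition itself, so when invoking it you should make clear you are using the direct construction of monodromy from a trivializing \'etale cover rather than presupposing representability.
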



\begin{proposition}\label{corrspnd}
Let $X$ be a scheme, and $p$ a prime invertible in $X$.  Let $\shF$ be  a locally constant constructible  $p$-torsion \'etale  sheaf on $X$, and let $\bar x \ra X$ be a geometric point. The functor $\shF \ra \shF_{\bar x}$ induces an equivalence of categories between locally constant  $p$-torsion \'etale  sheaves with finite stalks and finite $\pi_1^\et(X,\bar x)$-modules of $p$-power order.
\end{proposition}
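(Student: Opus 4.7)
The approach is to combine Proposition \ref{reps} with Grothendieck's Galois correspondence between finite \'etale covers of $X$ and finite continuous $\pi_1^\et(X,\bar x)$-sets, upgrading the latter to an equivalence of group objects.

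First, I would observe that by Proposition \ref{reps}, any locally constant constructible $p$-torsion \'etale sheaf $\shF$ is represented by a finite \'etale commutative group scheme $\tilde\shF \to X$ whose order (fiberwise) is a power of $p$. Conversely, any finite \'etale commutative $X$-group scheme of $p$-power order represents, via $Y \mapsto \Hom_X(-,Y)$, a locally constant constructible $p$-torsion \'etale sheaf (it becomes constant after pulling back along a finite \'etale cover that trivializes it). Thus the category of locally constant constructible $p$-torsion \'etale sheaves on $X$ is equivalent to the category of finite \'etale commutative $X$-group schemes of $p$-power order. The hypothesis that $p$ is invertible on $X$ is used here to ensure that all such group schemes are actually \'etale (e.g.\ that $\mu_p$ is an \'etale group scheme).

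Next, I would invoke Grothendieck's main theorem of Galois theory for schemes: the fiber functor $Y \mapsto Y(\bar x) = \Hom_X(\bar x, Y)$ gives an equivalence between the category $\text{F\'Et}/X$ of finite \'etale covers of $X$ and the category of finite continuous $\pi_1^\et(X,\bar x)$-sets. Since this equivalence preserves finite products and terminal objects, it sends group objects to group objects, and restricts to an equivalence between finite \'etale commutative $X$-group schemes of $p$-power order and finite continuous $\pi_1^\et(X,\bar x)$-modules of $p$-power order.

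Finally, I would identify the composite of these two equivalences with the stalk functor $\shF \mapsto \shF_{\bar x}$. Since $\shF$ is represented by $\tilde\shF$, one has a canonical identification
$$\shF_{\bar x} \;=\; \varinjlim_{(U,\bar u)} \shF(U) \;=\; \varinjlim_{(U,\bar u)} \Hom_X(U,\tilde\shF) \;=\; \Hom_X(\bar x, \tilde\shF) \;=\; \tilde\shF(\bar x),$$
so the stalk at $\bar x$ coincides with the fiber of the representing group scheme at $\bar x$, as $\pi_1^\et(X,\bar x)$-modules. The main (mild) point to verify carefully is that the $\pi_1^\et(X,\bar x)$-action on the stalk arising from the sheaf structure coincides with the Galois action on $\tilde\shF(\bar x)$ arising from Grothendieck's correspondence; this is a standard but unavoidable compatibility check, and is the only subtle step in an otherwise formal argument.
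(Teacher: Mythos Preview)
Your argument is correct and is exactly the standard proof of this well-known fact. Note, however, that the paper does not supply its own proof of this proposition: it is stated as a known result with a reference to \cite[Chapter V, pages 155--156]{MIL}, so there is no in-paper argument to compare against. What you have written is essentially the content one finds spelled out in that reference.

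One small remark: your aside that the hypothesis ``$p$ invertible on $X$'' is needed ``to ensure that all such group schemes are actually \'etale'' is slightly misplaced. In the statement as written, you start from a sheaf that is already locally constant constructible, so its representing group scheme is finite \'etale regardless of $p$; and conversely a finite $\pi_1^\et(X,\bar x)$-module always corresponds, via Galois theory, to a finite \'etale group scheme. Thus the equivalence you prove holds without the invertibility assumption. The hypothesis is there in the paper because it is the natural context for the applications (e.g.\ so that $\mu_{p^n}$ is locally constant and the correspondence captures all the ``expected'' $p$-torsion sheaves), not because the categorical equivalence itself requires it.
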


\begin{remark}\label{loclcnst}
It follows from Propositions \ref{reps} and \ref{corrspnd} that  for a given geometric point $\bar x \ra X$, and for  every locally constant constructible $p$-torsion  \'etale  sheaf $\shF$  on $X$, where $p$  is a prime invertible in $X$, there is a finite \'etale cover $g: Y \ra X$ such that
$$\shF_{\bar x} \simeq \shF_{|Y}  = \bigoplus_{i=1}^r (\ZZ/p^{n_i}\ZZ)^{m_i}$$
for some positive integer $r$.
\end{remark}
\subsection{Truncated spaces and Truncated morphisms}
We  recall here some basic notions needed in the sequel  from Lurie's higher category theory. Our standard references are \cite{HTT}, and \cite[Appendix E]{SAG}.

 By space we mean a simplicial set. Let $S$ be the $\infty$-category  of pointed spaces. and  let $n \geq -2$:

- For $n \geq 0$; a pointed space $(X,x)$ is called $n$-truncated if the homotopy groups $\pi_k(X,x)$ vanish for all $k >  n$. By convention,  $X$  is (-1)-truncated if it is either empty or weakly contractible, and (-2)-truncated if $X$ is weakly contractible.

- A morphism of pointed spaces  $f: (Y,y) \ra (X,x)$ is called $n$-truncated if the fibers of $f$ are $n$-truncated, or equivalently if  the induced map of homotopy groups  $\pi_k(f):\pi_k(Y,y)  \ra \pi_k(X,x)$ is a monomorphism for $k=n+1$ and an  isomorphism for $k \geq n+2$.

- For any pointed space $(X,x)$ there exists   an initial object in the category of  0-truncated morphisms to $(X,x)$  called the universal covering space of $(X,x)$.


Now in $\Pro(S)$ which is the $\infty$-category  of $\Pro$-spaces in $S$, all the aforementioned  notions  can be defined in a similar manner. Denote  by $S_{<\infty}$   the  $\infty$-category which is the full subcategory of $S$ whose object are truncated spaces, as discussed in \cite{Hoyflow}, there is  an equivalence of categories 
\begin{equation}\label{acclex}
\Pro(S_{<\infty}) \simeq \Fun^{acc,lex}(S_{<\infty},S)^{op}
\end{equation}
where the term on the right stands for the category of left exact accessible functors $F: S_{<\infty} \ra S$ (cf. \cite[Proposition A.8.1.6]{SAG}).

\section{Profinite-\'etale covers and universal covering spaces of $Et(X)$}

Let $X$ be a quasi-compact and quasi-separated scheme. Following Vakil and Wickelgren \cite{VW}, a cover $f : Y \ra X$ of $X$ is called profinite-\'etale if $Y$ is an inverse limit of finite \'etale covers of $X$. Such a cover exists as a  scheme  by    \cite[Exp VII,  5.1]{SGA4}.  The pro-universal cover $\hat f : \hat X \ra X$ of $X$  is by definition an initial object in the category of profinite-\'etale covers of $X$, thus  $\hat X=\varprojlim X_i$  is  the inverse limit of all finite \'etale covers of $X$. Note that $\hat X$ is the universal covering  of $X$  in the sense of \cite[Definition 6]{VW}. We begin this section by some  lifting and cohomological  properties of these types of covers.


\begin{proposition}\label{geompointuniv}\cite[Proposition 2.3]{VW}
\mylabel{vanish1}
Let  $X$   be a connected  scheme, and let $\bar x \ra X$ be a geometric point of $X$. If $f: Y \ra X$ is a profinite-\'etale cover of $X$, then there is a geometric point $\bar y \ra Y$ of $Y$ mapping to $\bar x$.

\end{proposition}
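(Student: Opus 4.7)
The plan is to reduce the existence of the lifting $\bar y$ to the non-emptiness of the geometric fiber $Y \times_X \bar x$, and then to establish non-emptiness by a classical compactness argument on a cofiltered system of finite sets.

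First, write $Y = \varprojlim_i Y_i$ where $\{f_i : Y_i \to X\}_i$ is the defining cofiltered system of finite \'etale covers of $X$. Since fibered products commute with cofiltered limits in the category of schemes (which is exactly the content invoked from \cite[Exp VII, 5.1]{SGA4} to guarantee that $Y$ exists as a scheme), one has a canonical identification
$$Y \times_X \bar x \;\simeq\; \varprojlim_i \bigl(Y_i \times_X \bar x\bigr).$$
A geometric point $\bar y \to Y$ lying over $\bar x$ is precisely a $\bar x$-point of this fiber, so it suffices to prove that $Y \times_X \bar x$ has a non-empty underlying set. Now for each index $i$, set $F_i := f_i^{-1}(\bar x) = Y_i \times_X \bar x$. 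Since $f_i$ is finite and \'etale, $F_i$ is a finite disjoint union of copies of $\bar x = \Spec \Omega$, and in particular it is described at the set-theoretic level by its underlying finite set; moreover $F_i$ is non-empty because $X$ is connected and the image of a non-trivial \'etale cover is a non-empty clopen subscheme, hence all of $X$.

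The transition maps $F_j \to F_i$, for $Y_j \to Y_i$ in the system, form a cofiltered inverse system of non-empty finite sets, each equipped with the discrete topology. By the standard Tychonoff compactness argument, applied to the compact Hausdorff product $\prod_i F_i$ and the closed subsets cut out by the compatibility conditions, the inverse limit $\varprojlim_i F_i$ is non-empty. Any element of this limit, interpreted as a compatible system of $\Omega$-points, provides the sought geometric point $\bar y \to Y$ mapping to $\bar x$.

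There is no real obstacle here beyond bookkeeping; the only slightly delicate point is the interchange of the set-theoretic fiber with the cofiltered limit of schemes, which reduces to the fact that the $F_i$ are affine over $\bar x$, so the limit is computed as the spectrum of a filtered colimit of finite $\Omega$-algebras and its underlying set is $\varprojlim_i F_i$.
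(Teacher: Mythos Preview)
Your argument is correct and is the standard proof of this fact: pass to the geometric fiber, identify it with the inverse limit of the finite fibers, and invoke the non-emptiness of a cofiltered limit of non-empty finite sets. The paper does not actually give a proof of this proposition at all; it simply quotes the result from \cite[Proposition 2.3]{VW}, so there is nothing to compare against beyond noting that your proof is essentially the one found in that reference.

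One small remark: your justification that each $F_i$ is non-empty tacitly assumes the $Y_i$ are non-empty. This is harmless under the usual convention that a ``cover'' is surjective (as in \cite{VW}), but you might state it explicitly, since the empty scheme is formally finite \'etale over $X$ and would break the compactness step.
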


\begin{lemma}\label{hpro}
Let  $X$   be a connected noetherian scheme,  then for any locally constant  constructible torsion \'etale sheaf $\shF$ on $X$, we have  $H_{\et}^1(\hat{X}, \shF_{|\hat X}) = 0 $.

\end{lemma}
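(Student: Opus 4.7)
The plan is to combine the continuity of étale cohomology along affine inverse systems with the torsor interpretation of $H^1$.

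First I would note that each transition morphism $X_j \to X_i$ in the inverse system defining $\hat X$ is finite and in particular affine, and $\shF$ is torsion by hypothesis, so the standard continuity theorem for étale cohomology (\cite[Exp.~VII, 5.8]{SGA4}) yields a natural isomorphism
$$H^1_\et(\hat X, \shF_{|\hat X}) \;\simeq\; \varinjlim_i H^1_\et(X_i, \shF_{|X_i}).$$
It then suffices to show that every class $\beta_i \in H^1_\et(X_i, \shF_{|X_i})$ is killed by passing to some further finite étale cover of $X$ sitting above $X_i$ in the system.

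Next I would invoke Proposition \ref{reps}: the pullback $\shF_{|X_i}$ is representable by a finite étale commutative group scheme $G_i \to X_i$, so the class $\beta_i$ corresponds to a $G_i$-torsor $T_i \to X_i$. Since $G_i \to X_i$ is finite étale, étale descent forces $T_i \to X_i$ to be finite étale as well, and by the very construction of torsors $\beta_i$ restricts to zero in $H^1_\et(T_i, \shF_{|T_i})$. The composition $T_i \to X_i \to X$ is a composition of finite étale morphisms, hence finite étale, so $T_i$ is itself an object of the inverse system defining $\hat X$ sitting over $X_i$ via the torsor structure map. Consequently the image of $\beta_i$ in the filtered colimit is zero, which in view of the displayed isomorphism gives $H^1_\et(\hat X, \shF_{|\hat X}) = 0$.

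The principal technical input is the continuity of étale cohomology, which genuinely uses the torsion hypothesis on $\shF$ and the affineness of the transition maps in the pro-system $\{X_i\}$; once that is granted, the remaining content is the essentially formal observation that an $H^1$-class with coefficients in a finite étale group scheme dies on its associated torsor, combined with the stability of the class of finite étale morphisms under composition. I do not expect the geometrically unibranch hypothesis of the main theorem to play a role here, and indeed the statement is formulated only under the noetherian assumption.
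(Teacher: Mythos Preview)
Your proof is correct and follows the same strategy as the paper: use continuity of \'etale cohomology along the affine inverse system to reduce to showing that every class in $H^1_\et(X_i,\shF_{|X_i})$ dies on some finite \'etale cover of $X$ lying over $X_i$. The paper carries out this second step by invoking the identification $H^1_\et(Y,\shF_{|Y})\simeq H^1_{\fet}(Y,\shF_{|Y})$ from \cite[Lemma~4.10]{MTD}, whereas you argue directly via the associated torsor; the two justifications are equivalent, and your version is slightly more self-contained.
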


\begin{proof} 

We  prove that for every finite \'etale cover $f: Y \ra X$ and for every class $\beta \in  H_{\et}^1(Y,  \mathcal{F}_{|Y})$ there exists a finite \'etale cover $h: Z \ra X$ which factors through $f$, and such that  $\beta_{|Z} = 0$ in $ H_{\et}^1(Z,\mathcal{F}_{|Z})$. Choose  a geometric point $\bar x \ra X$, and let $f: (Y,\bar y)  \ra (X, \bar x) $  be a pointed  finite \'etale cover. We have by \cite[Lemma 4.10]{MTD}
$$H_{\et}^1(Y,  \mathcal{F}_{|Y}) \simeq   H^1(\pi_1^\et(Y, \bar y),  (\mathcal{F}_{|Y})_{\bar y}) \simeq  H_{\fet}^1(Y,  \mathcal{F}_{|Y}).$$
 Hence there exists a   finite \'etale cover $g: Z \ra Y$ such that $\beta_{|Z}= 0$ in $ H_{\et}^1(Z,  \mathcal{F}_{|Z})$. Therefore,  it suffices to take  $h = f \circ g: Z \ra X$ which is a finite \'etale cover of $X$.
\end{proof}


Let $X$ be a connected scheme, and let $Et(X)$ be its \'etale homotopy type. It is natural to ask if there exists a scheme $Y$ mapping to $X$, and for which $Et(Y)$ would be the universal covering space  $\tilde{Et} (X)$ of  $Et(X)$. We  present   here an affirmative answer (Proposition \ref{univetal}) when $X$ is at least noetherian and  geometrically unibranch,   by considering $Y=\hat X$ the pro-universal cover of $X$. The proof is due to  Hoyois and   uses $\infty$-categorical arguments.  We need first to state the  following two results which will be needed in the sequel.

\begin{proposition}\label{fintetalconn}\cite[Lemma 2.1]{SCHM} Any finite \'etale morphism of schemes $f : Y \ra X$ induces a connected covering space  $Et(f) : Et(Y) \ra Et(X)$.

\end{proposition}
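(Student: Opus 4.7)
The plan is to invoke the characterization of covering spaces recalled in Section 2.3: in $\Pro(S_{<\infty})$, a covering is a $0$-truncated morphism, equivalently one that induces a monomorphism on $\pi_1$ and an isomorphism on $\pi_k$ for every $k \geq 2$. Fixing a geometric point $\bar x \ra X$ and a lift $\bar y \ra Y$ (which exists as $f$ is surjective), it thus suffices to verify that $Et(Y)$ is connected, that $\pi_1^{\et}(f) : \pi_1^{\et}(Y,\bar y) \ra \pi_1^{\et}(X,\bar x)$ is a monomorphism, and that $\pi_k^{\et}(f)$ is an isomorphism for all $k \geq 2$. The statement implicitly assumes $Y$ connected, as otherwise $Et(f)$ is a finite coproduct of covering spaces, one per connected component of $Y$.

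The first two items are essentially classical. Connectedness of $Et(Y)$ amounts to $\pi_0^{\et}(Y) = \ast$, which follows from $Y$ being a connected scheme via the description of $\pi_0$ of the Artin-Mazur-Friedlander pro-space in terms of connected components of hypercovers. Grothendieck's \'etale fundamental group theory identifies $\pi_1^{\et}(Y,\bar y)$ with the open subgroup of $\pi_1^{\et}(X,\bar x)$ stabilizing $\bar y$ in the finite continuous $\pi_1^{\et}(X,\bar x)$-set $f^{-1}(\bar x)$, supplying the required injection on $\pi_1$.

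The higher pro-groups are handled by a pullback argument on hypercovers. Any rigid \'etale hypercover $\mathcal{U}_\bullet \ra Y$ can, after a suitable refinement, be realized as the base change of a rigid \'etale hypercover $\mathcal{V}_\bullet \ra X$ along $f$. This produces a cofinal functor between the diagrams in $Ho(Hyp)$ indexing $Et(Y)$ and $Et(X)$, realizing $Et(f)$ pro-levelwise by maps of simplicial sets of the form $|f^*\mathcal{V}_\bullet| \ra |\mathcal{V}_\bullet|$, each of which is a finite covering of simplicial sets with fiber the discrete set $f^{-1}(\bar x)$. Applying the long exact sequence in homotopy to these discrete-fiber fibrations and passing to the inverse limit yields $\pi_k^{\et}(Y,\bar y) \simeq \pi_k^{\et}(X,\bar x)$ for $k \geq 2$.

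The main obstacle is the passage from the levelwise statement to one genuinely in $\Pro(S_{<\infty})$: one must check that the pullback functor on rigid \'etale hypercovers along $f$ is cofinal and that the resulting map of pro-spaces is $0$-truncated as a morphism of pro-objects, not merely termwise. This is Artin-Mazur-Friedlander bookkeeping within the indexing category $Ho(Hyp)$, carried out in \cite[Lemma 2.1]{SCHM}, which I would cite rather than reprove.
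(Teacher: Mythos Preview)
The paper does not prove this proposition at all: it is stated with a direct citation to \cite[Lemma~2.1]{SCHM} and no argument is given. Your proposal is therefore consistent with the paper, and in fact goes further by sketching the strategy behind that lemma (connectedness, injectivity on $\pi_1$ from Galois theory, and the hypercover pullback argument for higher $\pi_k$), before ultimately deferring to the same reference for the cofinality bookkeeping.
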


\begin{corollary}\label{0tran}
Let  $f:  (Y,  \bar y) \ra (X,  \bar x)  $ be a finite \'etale  morphism of pointed connected schemes,  then the induced map
$$
\pi_n^\et(f): \pi_n^\et(Y, \bar y)    \lra   \pi_n^\et(X, \bar x) 
$$
is an isomorphism for every $n \geq 2$.

\end{corollary}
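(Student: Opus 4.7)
The plan is to apply Proposition \ref{fintetalconn} directly: since $f: Y\to X$ is finite \'etale, the induced morphism $Et(f):Et(Y)\to Et(X)$ of pro-spaces is a connected covering space. The corollary will then follow from the classical (pro-)homotopical fact that a covering space induces isomorphisms on all higher homotopy groups $\pi_n$ for $n\geq 2$.

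To make this precise, recall that a covering space is a fibration whose fibers are discrete, i.e.\ $0$-truncated. Applied to $Et(f)$, the fiber $F$ over $\bar x_{\et}$ is a profinite set (concretely, a pro-finite set modelling the geometric fiber of $f$ at $\bar x$), so $\pi_k(F)=0$ for every $k\geq 1$. Feeding this into the long exact sequence of homotopy pro-groups associated to the fibration
$$F \lra Et(Y) \stackrel{Et(f)}{\lra} Et(X),$$
one obtains, for every $n\geq 2$,
$$\pi_n(F) \lra \pi_n^\et(Y,\bar y) \lra \pi_n^\et(X,\bar x) \lra \pi_{n-1}(F),$$
with outer terms zero. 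Hence $\pi_n^\et(f)$ is an isomorphism for $n\geq 2$, as claimed. (For $n=1$ the same argument only gives injectivity, which is consistent with the standard behaviour of covering spaces.)

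The main conceptual point that one must not gloss over is that Proposition \ref{fintetalconn} delivers $Et(f)$ as a covering space in the pro-category $\Pro\text{-}Ho(Ssets)$, and that the long exact sequence of homotopy pro-groups of a fibration of pro-spaces is a formal consequence of the corresponding statement in $Ho(Ssets)$ applied levelwise along a cofinal system of rigid \'etale hypercoverings (as set up in Artin--Mazur and Friedlander). Once this level-wise picture is in place, the discreteness of the fibers is inherited levelwise from the finiteness of the \'etale cover, and the argument above is literally the classical one. This is really the only nontrivial verification; the rest is bookkeeping.
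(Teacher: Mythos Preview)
Your proposal is correct and follows exactly the reasoning the paper leaves implicit: the paper states Corollary~\ref{0tran} without proof, treating it as an immediate consequence of Proposition~\ref{fintetalconn} via the standard fact that a covering space (equivalently, a $0$-truncated map) induces isomorphisms on $\pi_n$ for $n\geq 2$. Your long exact sequence argument simply makes this classical step explicit.
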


\begin{proposition}\cite{Hoyflow}\label{univetal}
Let $(X,\bar x)$ be a pointed  connected noetherian geometrically unibranch  scheme, then we have 
$$ \tilde{Et} (X)= Et(\hat X). $$
That is,   the  universal covering space of  the \'etale homotopy type  $Et(X)$ is  the \'etale homotopy type of the pro-universal cover $\hat X$.

\end{proposition}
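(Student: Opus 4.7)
The plan is to identify $Et(\hat X)$ with the universal covering space $\tilde{Et}(X)$ in the $\infty$-category $\Pro(S_{<\infty})$ by verifying that the natural morphism $Et(\hat X) \to Et(X)$ satisfies the defining universal property of the initial $0$-truncated morphism to $Et(X)$. The first ingredient is a continuity result for $Et$ along cofiltered limits of schemes with affine transition maps (due to Hoyois): since $\hat X = \varprojlim_i X_i$ is such a limit with finite étale transition maps, this yields a canonical equivalence
\[
Et(\hat X) \simeq \varprojlim_i Et(X_i)
\]
in $\Pro(S_{<\infty})$, compatibly with the structure maps down to $Et(X)$. This reduces all verifications to properties of the cofiltered system of pointed finite étale covers $(X_i, \bar x_i) \to (X,\bar x)$.

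Using this presentation, I would verify that $Et(\hat X) \to Et(X)$ is $0$-truncated. For the higher homotopy, Corollary \ref{0tran} gives isomorphisms $\pi_n^\et(X_i, \bar x_i) \simeq \pi_n^\et(X, \bar x)$ for every $n \geq 2$ and every $i$, and passing to the cofiltered limit (using that $\pi_n$ commutes with cofiltered limits in $\Pro(S_{<\infty})$) yields $\pi_n^\et(\hat X, \bar x) \simeq \pi_n^\et(X, \bar x)$ for $n \geq 2$. For the fundamental group, the geometrically unibranch hypothesis is crucial: under it every connected finite étale cover of $\hat X$ descends to some $X_j \to X_i$, so the cofinal system of all finite étale covers is trivialized upon passage to $\hat X$, whence $\pi_1^\et(\hat X, \bar x) = 0$. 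Together, the isomorphism on $\pi_{\geq 2}$ and the vanishing of $\pi_1^\et(\hat X, \bar x)$ are precisely the $0$-truncatedness of $Et(\hat X) \to Et(X)$.

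It remains to check initiality. By Grothendieck's Galois correspondence, pointed connected $0$-truncated morphisms to $Et(X)$ in $\Pro(S_{<\infty})$ are classified by closed subgroups of $\pi_1^\et(X, \bar x)$ and realized geometrically by profinite-étale covers of $X$; the initial such object corresponds to the trivial subgroup, which is by the very definition of $\hat f$ the pro-universal cover $\hat X \to X$. Any $0$-truncated morphism $Y \to Et(X)$ therefore receives a unique map from $Et(\hat X)$ over $Et(X)$, identifying $Et(\hat X)$ with $\tilde{Et}(X)$. The main obstacle is the vanishing $\pi_1^\et(\hat X, \bar x) = 0$, where the geometrically unibranch assumption is unavoidable: without it, normalizations could produce finite étale covers of $\hat X$ not descending from any $X_i$, and the Galois-theoretic classification above would fail. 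The continuity of $Et$ along $\hat X = \varprojlim_i X_i$ is a nontrivial but standard input that I would take as a black box.
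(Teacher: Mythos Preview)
Your approach is essentially the paper's: both establish the continuity $Et(\hat X)\simeq\varprojlim_i Et(X_i)$ (the paper derives it from the functor-of-points description $Et(X)(K)=H^0_\et(X,K)$ together with \cite[III, Lemma 1.16]{MIL}), both use Corollary~\ref{0tran} and closure of $0$-truncated maps under limits to see that $Et(\hat f)$ is $0$-truncated, and both verify $\pi_1^\et(\hat X)=0$. Two points where your write-up departs from the paper's logic deserve comment.

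First, you misplace the role of the geometrically unibranch hypothesis. The descent of a finite \'etale cover of $\hat X$ to some $X_i$ is a general noetherian-approximation fact and requires nothing about $X$; what geometrically unibranch actually buys is that the Artin--Mazur pro-group $\pi_1^\et(X,\bar x)$ is already \emph{profinite}. From $0$-truncatedness one then inherits profiniteness of $\pi_1^\et(\hat X,\hat x)$, and the triviality of all finite \'etale covers of $\hat X$ forces $\pi_1^\et(\hat X,\hat x)=0$. Without unibranch, $\pi_1^\et(\hat X)$ could be a nontrivial pro-group with trivial profinite completion, so your stated reason for the vanishing is not the operative one. Second, your separate ``initiality via Galois correspondence'' step is unnecessary, and its justification is circular as written: the classification of pointed connected $0$-truncated morphisms to $Et(X)$ by profinite-\'etale covers of $X$ is essentially the content of the proposition. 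The paper sidesteps this by simply using that in $\Pro(S_{<\infty})$ a connected, simply connected, $0$-truncated object over $Et(X)$ \emph{is} the universal cover, so the three conditions (i)--(iii) already suffice.
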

\begin{proof} 

We regard $Et(X)$ as  an object in the $\infty$-category Pro(S). Since $X$ is   locally noetherian, it follows from \cite[Corollary 5.6]{Hoy18} that the   Artin-Mazur-Friedlander \'etale homotopy type functor $Et$ can be regarded, up to  protruncation (cf .\cite[1.2]{HHW}, see also \cite[Example 4.2.8]{BGH} or \cite[Example 1.9]{PJH} ), as a functor 
$$Et: Sch \lra \Pro(S_{<\infty}).$$

Therefore, under the equivalence (\ref{acclex}), $Et(X): S_{<\infty} \ra S$  would be the functor  sending a truncated space $K$ to the global section $H^0_\et(X,K)$  of the constant \'etale sheaf on $X$ with values in $K$. The statement of the proposition is equivalent then to the following three assertions:

\begin{itemize}
\item [(i)]     $Et(\hat X)$ is connected,

\item [(ii)]  The \'etale  homotopy group $\pi_1^\et(\hat X, \hat x)$ is trivial, where  $\hat x \ra \hat X$ be   the corresponding geometric point of $\hat X$ as in Proposition \ref{geompointuniv},

\item [(iii)]    $Et(\hat f): Et(\hat X) \ra  {Et} (X)$ is a 0-truncated map.
\end{itemize}

For the first assertion (i), since $X$ is quasi-compact and quasi-separated,  then any clopen subset of $\hat X$ comes from a clopen subset of some finite \'etale cover of $ X$, thus if $X$ is connected then so is $\hat X$. Assuming (iii), the second assertion (ii) follows from the fact that the homotopy pro-group $\pi_1^\et(X, \bar x)$,  and  consequently  $\pi_1^\et(\hat X, \hat x)$,  are profinite due to $X$ being geometrically unibranch. Hence $\pi_1^\et(\hat X, \hat x)=0$ since, by construction,  any finite \'etale cover of $\hat X$ is trivial. Now by Corollary \ref{0tran},  for any finite \'etale morphism  $f': X'  \ra X$  the induced map $Et(f') : Et(X') \ra Et(X)$ is 0-truncated.  This follows alternatively, as observed by Hoyois, from the fact that $Et(f)$ is the pullback of a morphism of groupo\"{i}ds  $\pi: \Xi' \ra \Xi$ with finite discrete fibers. Moreover,  it follows form  the equivalence (\ref{acclex})  and  \cite[Chapter III,  Lemma 1.16]{MIL} that for any truncated space $K$
$$Et(\hat X)(K)=H^0_\et(\hat X,K)= \varinjlim H^0_\et(X_i,K)=\varinjlim Et(X_i)(K).$$
This means that $Et$ preserves limits. Therefore, since 0-truncated morphisms are stable under limits,  the map  $Et(\hat f): Et(\hat X) \ra  \tilde{Et} (X)$ is  0-truncated. This proves (iii) and finishes the proof of the proposition.
\end{proof}


\section{Proof of the main result} 

In this section we give the proof of Theorem \ref{mainthm}. For   simplicity, throughout the proof, for  every locally constant constructible  torsion  \'etale  sheaf $\shF$  on $X$, we simply denote  $g:Z \ra X$  the finite \'etale cover of $X$ as in Remark \ref{loclcnst}. This means that  
$  \shF_{|Z} \simeq \shF_{\bar x} $ is constant.

(i)$\Rightarrow$(ii) Let $\shF$  be a locally constant constructible  torsion  \'etale  sheaf  on $X$. Consider the spectral sequence 
\begin{equation}\label{sphomotop}
E^{p,q}_2=H^p(\pi_1^\et(X, \bar x), H^q(\tilde{Et}(X) ,p^*\shF)) \Ra H_{\et}^{p+q}(X, \shF)
\end{equation}
associated to  the  universal covering  $p: \tilde{Et}(X) \ra Et(X)$. It gives  then an exact sequence (cf. \cite[Appendix B, page 309]{MIL})
\begin{align}\label{h12}
0 \ra  H^1(\pi_1^\et(X, \bar{x}),   H^0(\tilde{Et}(X),  p^*\shF))      \ra   H_{\et}^1(X,  \shF)  & \ra   H^0(\pi_1^\et(X, \bar{x}),   H^1(\tilde{Et}(X),   p^*\shF))   \\
 \ra  H^2(\pi_1^\et(X, \bar{x}),   H^0(\tilde{Et}(X), p^*\shF))     \ra   E^2_1 & \ra   H^1(\pi_1^\et(X, \bar{x}),   H^1(\tilde{Et}(X),  p^*\shF)) \notag
\end{align}
where 
$$
 E^2_1 =\ker(H_{\et}^2(X,  \shF) \ra H^0(\pi_1^\et(X, \bar{x}),   H^2(\tilde{Et}(X),  p^* \shF)).
$$
We have $H^0(\tilde{Et}(X),  p^*\shF) =  \shF_{\bar x}$ and $H^1(\tilde{Et}(X),  p^*\shF)  =0$. Moreover,  the universal coefficient theorem and  Hurewicz theorem  yield an isomorphism
$$
 H^2(\tilde{Et}(X),  p^*\shF)  \simeq  \Hom( \pi_2^\et(X, \bar{x}),  \shF_{\bar x}).
$$
Hence we get from the exact sequence \eqref{h12} an isomorphism 
$$
 H^2(\pi_1^\et(X, \bar{x}),   \shF_{\bar x}) \simeq \ker(H_{\et}^2(X,  \shF) \ra \Hom( \pi_2^\et(X, \bar{x}),  \shF_{\bar x})^{\pi_1^\et(X, \bar{x})}).
$$
Since $\pi_2^\et(X, \bar{x})=0$, the assertion follows immediately.

(ii)$\Rightarrow$(iii) As in the proof of  \cite[Theorem 4.2]{MTD}  we have 

\begin{align}
H_{\et}^2(X,  \mathcal{F}) & \simeq   H^2(\pi_1^\et(X, \bar{x}), \mathcal{F}_{\bar x}) \\
& \simeq \varinjlim H^2(\pi_1^\et(X, \bar{x})/ N,  \mathcal{F}_{\bar x}^N) \notag
\end{align}
where the limit runs over all normal open subgroups $N$ of $\pi_1^\et(X, \bar{x})$,  and 
 $\mathcal{F}_{\bar x}^N$ is the submodule of $N$-invariant elements.
Take  a class $\beta \in H_{\et}^2(X,  \mathcal{F})$,  it belongs  to a group $H^2(\pi_1^\et(X, \bar{x})/ N, \mathcal{F}_{\bar x}^N) $ for some open normal subgroup $N$. Therefore, according to the fundamental Galois correspondence, there exists a pointed \'etale Galois cover $g: (Y,  \bar y)  \rightarrow (X,  \bar x)$ with Galois group  $G:= \pi_1^\et(X, \bar{x})/ N$    and   $\pi_1^\et(Y, \bar{y}) = N $. This implies that $\beta_{|Y}=0$ in $  H_{\et}^2(Y, \mathcal{F}_{|Y})$.

(iii)$\Rightarrow$(iv) Obvious.

(iv)$\Rightarrow$(v) Any finite \'etale cover $f: Y \ra X$ induces by Proposition \ref{fintetalconn} a connected  covering space $Et(f) :  Et(Y) \ra  Et(X)$. The statement follows then from the fact that  the universal covering space $p: \tilde Et(X) \ra Et(X)$ factors through $Et(f)$. 

(v)$\Rightarrow$(i)   Let $G$ be a finite group. By applying  Proposition \ref{univetal}  and Remark \ref{loclcnst} to the constant \'etale sheaf with values in $G$, we can prove that $H^2(\tilde Et(X),G)=0$. Therefore, by the universal coefficients theorem for cohomology and Hurewicz theorem, we get $\Hom(\pi_2^\et(X, \bar{x}), G)=0$. Now since $\pi_2^\et(X, \bar{x})$ is profinite, it can be written as an inverse limit of finite groups $(G_i)_{i \in I}$. The assertion holds then by replacing $G$ by $G_i$ for each $i \in I$.

(vi)$\Rightarrow$(ii) 
Let $A = (X_i,  f_{ij})$  be the inverse  system of finite \'etale  covers of $X$. For every element $f_i: X_i \ra X$ in $A$,  there exists an \'etale  Galois cover $g_i: Y_i \ra X$ which factors through $f_i$,  hence elements in this inverse  system can be taken to be Galois. This being said, we get an identification (cf. \cite[Chapter I, page 40]{MIL})
$$\pi_1^\et(X, \bar{x})= \varprojlim \Aut_X(X_i)$$
where $ \Aut_X(X_i) $ is the group of $X$-automorphisms of $X_i$ acting on the right. For every \'etale Galois cover $g_i:  X_i \rightarrow X$,  consider the Hochschild-Serre spectral sequence 
$$
E^{p, q}_2(X_i)= H^p(\Aut_X(X_i),    H_{\et}^q(X_i,  \shF_{| X_i})) \Rightarrow  H_{\et}^{p+q}(X, \mathcal{F})
$$

Taking the direct  limit  of $E^{p, q}_2(X_i)$ we get by \cite[Chapter I, \S 2.2,  Proposition 8]{SER} 
\begin{align}
\varinjlim H^p(\Aut_X(X_i),   H_{\et}^q(X_i,  \shF_{| X_i}) ) & \simeq   H^p(\varprojlim \Aut_X(X_i),  \varinjlim H_{\et}^q(X_i,  \shF_{| X_i}) ) \notag \\
& \simeq  H^p(\pi_1^\et(X, \bar{x}),   H_{\et}^q(\hat{X},   \shF_{|\hat X}))\notag
\end{align}

Hence  we obtain  by \cite[Chapter III,  Remark 2.21.b]{MIL} a spectral sequence 
\begin{equation}\label{hspctral}
E^{p, q}_2 =  H^p(\pi_1^\et(X, \bar{x}),   H_{\et}^q(\hat{X},  \shF_{|\hat X}))   \Rightarrow  H_{\et}^{p+q}(X, \mathcal{F}).
\end{equation}

It yields  an exact sequence of low-degree terms
\begin{align}\label{sixterm}
0 \ra  H^1(\pi_1^\et(X, \bar{x}),   H_{\et}^0(\hat{X},  \shF_{|\hat X}))    \ra   H_{\et}^1(X, \mathcal{F})  & \ra   H^0(\pi_1^\et(X, \bar{x}),   H_{\et}^1(\hat{X}, \shF_{|\hat X}))\\
 \ra  H^2(\pi_1^\et(X, \bar{x}),   H_{\et}^0(\hat{X}, \shF_{|\hat X}))   \ra  E^2_1   & \ra   H^1(\pi_1^\et(X, \bar{x}),   H_{\et}^1(\hat{X},  \shF_{|\hat X})) \notag
\end{align}
where 
$$
 E^2_1 =\ker(H_{\et}^2(X,  \shF) \ra  H^0(\pi_1^\et(X, \bar{x}),   H_{\et}^2(\hat{X},  \shF_{|\hat X})).
$$

By Lemma \ref{hpro}  $ H_{\et}^1(\hat{X},  \shF_{|\hat X})=0$, and since $\hat f: \hat X \ra X$ factors through $g:Z \ra X$, we have 
$$ H_{\et}^0(\hat{X}, \shF_{|\hat X}) =  H_{\et}^0(Z, \shF_{|Z}) = \shF_{\bar x}.$$

Therefore, the assumption implies that 
$$H_{\et}^2(X,  \mathcal{F}) \simeq   H^2(\pi_1^\et(X, \bar x),  \mathcal{F}_{\bar x}).$$

(i)$\Rightarrow$(vi) We     prove that for every finite \'etale cover $f: Y \ra X$ and for every class $\beta \in  H_{\et}^2(Y,  \mathcal{F}_{|Y})$ there exists a finite \'etale cover $h: Y' \ra X$ which  factors through $Y$, and   such that $\beta_{|Y'} = 0$ in $ H_{\et}^2(Y',\mathcal{F}_{|Y'})$. Let $f: (Y,\bar y)  \ra (X, \bar x) $  be a pointed  finite \'etale cover. We have  by Corollary \ref{0tran}   $\pi_2^\et(Y, \bar y)=0$, hence for  a class  $\beta \in  H_{\et}^2(Y,  \mathcal{F}_{|Y})$,      there exists a   finite \'etale cover $f': Y' \ra Y$ such that $\beta_{|Y'} = 0$ in $ H_{\et}^2(Y',\mathcal{F}_{|Y'})$, thus as in the proof of Lemma \ref{hpro} we can take $h = f \circ f': Y' \ra X$.

  (iv)$\Rightarrow$(ii)    It suffices to prove that the term $E^2_1$  in the exact sequence (\ref{h12}) is equal to $ H_{\et}^2(X,  \shF)$. 
By the universal coefficient theorem  for the $\ZZ$-module $\shF_{\bar x}$, we have an isomorphism 
$$
 H^2(\tilde{Et}(X),  \mathcal{F}_{\bar x})  \simeq  \Hom( H_2( \tilde{Et}(X),  \ZZ),  \mathcal{F}_{\bar x}).
$$

Let $\alpha \in  H_{\et}^2(X,  \shF)$, and let $f :  Y	 \ra X$ be the finite \'etale cover that kills $\alpha$. Consider the following  diagram with commutative squares and triangles

\begin{equation}
\centerline{\xymatrix{  H_{\et}^2(Y,  \shF_{|Y}) \ar[d]  & H_{\et}^2(X,  \shF) \ar[d]  \ar[r] \ar[l]  \ar[dl] & \Hom(H_2( \tilde{Et}(X),  \ZZ),  \mathcal{F}_{\bar x})^{\pi_1^\et(X, \bar{x})} \ar@{^{(}->}[d] \\
 H_{\et}^2(Z',  \shF_{|Z'})    \ar[r] & H^2(\tilde{Et}(X),\mathcal{F}_{\bar x})   \ar[r]^-\simeq & \Hom(H_2( \tilde{Et}(X),  \ZZ),  \shF_{\bar x})  }}
\end{equation}
where $Z' = Y \times_X Z$. We have  $\shF_{|Z'} = \shF_{\bar x}$ and $ H_{\et}^2(Z',  \shF_{|Z'})  =  H^2(Et(Z'),  \shF_{\bar x})  $. A simple diagram chasing shows that the class $\alpha$ maps to zero in $ \Hom(H_2( \tilde{Et}(X),  \ZZ),  \mathcal{F}_{\bar x})^{\pi_1^\et(X, \bar{x})} $. Hence the assertion.

  (vi)$\Rightarrow$(vii) Take $Y=\hat X$.

  (vii)$\Rightarrow$(ii) Let $Y=\varprojlim Y_i$ be a profinite-\'etale cover of $X$. If $Z=Y_i$ for some $i$, then $\shF_{|Y} = \shF_{\bar x}$. In general, since inverse limits  commute with the fiber product, we can put $Z' =  Y \times_X Z = \varprojlim(Y_i \times_X Z) $. Every element $ Y_i \times_X Z$ is finite \'etale over $X$, hence $Z'$ is a profinite-\'etale cover of $X$, and we have $\shF_{|Z'} = \shF_{\bar x}$. On the other hand, we have by Proposition \ref{univetal} $\tilde{Et}(X) = Et(\hat X)$, this gives the following commutative diagram

\begin{equation}\label{digrm2}
\centerline{\xymatrix{   H_{\et}^2(Y,  \shF_{|Y}) \ar[d] & H_{\et}^2(X,  \shF) \ar[l] \ar[d]  \ar[r]   \ar[dl] & \Hom(H_2( \hat X ,  \ZZ),  \mathcal{F}_{\bar x})^{\pi_1^\et(X, \bar{x})} \ar@{^{(}->}[d] \\
 H_{\et}^2(Z',  \shF_{\bar x})    \ar[r] & H_\et^2(\hat X,\mathcal{F}_{\bar x})   \ar[r]^-\simeq & \Hom(H_2( \hat X,  \ZZ),  \shF_{\bar x})  }}
\end{equation}

By the property of the inverse limit, the pro-universal cover     factors through every  profinite-\'etale cover. Therefore, the assertion holds using the same argument as in the proof of    (iv)$\Rightarrow$(ii).

\begin{remark}\label{alln}
 The arguments sketched in the proof could be easily extended to prove that Theorem \ref{mainthm}  holds for every $n \geq 2$ in the sense that the assertion ``$\pi^\et_n(X, \bar x)=0$ for all $n \geq 2$''  is equivalent to each of the remaining assertions holding for every $n \geq 2$. This is done by the evaluation of the term $E_2^{n-1,0}$ of the long exact sequences of cohomology induced by  spectral sequences \eqref{sphomotop} and \eqref{hspctral} used in the proof. The statement for $n=2$ fixed,  clearly applies tanks to Lemma \ref{hpro}. This being said, one can naturally ask whether  the statement  is valid for a fixed $n \geq 3$.
\end{remark}

If $X$ is  defined over a field $k$, Theorem \ref{mainthm} behaves equivalently under base change:

\begin{proposition}
Let $X$ be a normal geometrically connected scheme of finite type over a field $k$, and let $K/k$ be an extension of fields. Then $X$ satisfies conditions of Theorem \ref{mainthm}  if and only if $X_K:= X \times_k  K$ does.
\end{proposition}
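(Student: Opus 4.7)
The plan is to reduce the statement, via the equivalence of conditions (i) and (ii) in Theorem \ref{mainthm}, to the invariance of the second \'etale homotopy group under base change, and then to combine the Artin--Mazur/Friedlander fibration sequence of \'etale homotopy types with the classical invariance of $\pi_1^\et$ and $H^2_\et$ under extensions of algebraically closed base fields.

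First I would note that, being geometrically unibranch is preserved under base change by field extensions, $X_K$ is again connected, noetherian and geometrically unibranch, so Theorem \ref{mainthm} applies to it. Hence it suffices to prove the equivalence $\pi_2^\et(X,\bar x) = 0 \iff \pi_2^\et(X_K, \bar x) = 0$.

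Next I would invoke the Artin--Mazur/Friedlander homotopy fibration sequence
$$Et(X_{\bar k}) \lra Et(X) \lra Et(\Spec k),$$
valid for any geometrically connected scheme $X$ of finite type over $k$. Since $\pi_n^\et(\Spec k) = 0$ for all $n \geq 2$, the induced long exact sequence on pro-homotopy groups yields $\pi_n^\et(X_{\bar k}, \bar x) \simeq \pi_n^\et(X, \bar x)$ for every $n \geq 2$. Applying this to both $X/k$ and $X_K/K$, together with the identification $(X_K)_{\bar K} \simeq X \times_k \bar K$ coming from a choice of embedding $\bar k \hookrightarrow \bar K$, the question reduces to the invariance
$$\pi_2^\et(X_{\bar k}, \bar x) = 0 \iff \pi_2^\et(X_{\bar K}, \bar x) = 0.$$

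For this last step I would translate back via the equivalence (i) $\iff$ (ii) of Theorem \ref{mainthm}: for $X_{\bar k}$ geometrically unibranch of finite type over the algebraically closed field $\bar k$, the extension $\bar k \subset \bar K$ induces an isomorphism $\pi_1^\et(X_{\bar k}, \bar x) \simeq \pi_1^\et(X_{\bar K}, \bar x)$ coming from invariance of the category of finite \'etale covers, and, by standard base change theorems for \'etale cohomology between algebraically closed base fields, an isomorphism $H^*_\et(X_{\bar k}, \shF) \simeq H^*_\et(X_{\bar K}, \shF|_{X_{\bar K}})$ for every locally constant constructible torsion sheaf $\shF$. Combined with Proposition \ref{reps}, which under the $\pi_1^\et$-isomorphism implies that every such sheaf on $X_{\bar K}$ descends to one on $X_{\bar k}$, these isomorphisms transport condition (ii) from one side to the other. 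The main obstacle is marshalling the cohomological invariance for \emph{all} torsion orders (including $p$-torsion in positive characteristic) on the possibly non-proper scheme $X_{\bar k}$, but once both $X_{\bar k}$ and $X_{\bar K}$ are noetherian and of finite type this is standard.
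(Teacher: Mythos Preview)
Your approach is correct in outline but genuinely different from the paper's. The paper proceeds by a three-step d\'evissage on the extension $K/k$: first $K/k$ finite separable, where $X_K\to X$ is finite \'etale and Corollary~\ref{0tran} gives directly $\pi_n^\et(X_K)\simeq\pi_n^\et(X)$ for $n\ge 2$; then $K=k^{\sep}$, handled as a filtered colimit of the previous case via \cite[Chapter III, Lemma 1.16]{MIL}; and finally the passage between separably closed fields, which is deferred entirely to \cite[Proposition 3.2(c)]{Ach17}. You instead invoke the Artin--Mazur/Friedlander fibration $Et(X_{\bar k})\to Et(X)\to Et(\Spec k)$ once to reduce immediately to extensions of algebraically closed fields, and then translate through condition~(ii) using invariance of $\pi_1^\et$ and of $H^2_\et$ with lcc torsion coefficients under such extensions. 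Your route is more conceptual and skips the separable d\'evissage; the paper's route is more elementary for the separable part (it only uses its own Corollary~\ref{0tran}) but outsources the remaining case just as you do.

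Two points deserve tightening. First, your opening claim that ``geometrically unibranch is preserved under base change by field extensions'' is not generally true for merely normal (as opposed to geometrically normal) $X$, and you do need $X_K$ geometrically unibranch to invoke Theorem~\ref{mainthm} on that side; the paper handles this by checking normality of $X_K$ step by step along its d\'evissage. Second, the invariance $H^2_\et(X_{\bar k},\shF)\simeq H^2_\et(X_{\bar K},\shF)$ that you call ``standard'' is indeed routine for torsion prime to the characteristic (smooth base change), but for $p$-torsion in characteristic $p$ it is not automatic and in fact can fail for non-proper $X$; this is precisely the content that both you and the paper ultimately borrow from \cite{Ach17}, so you should cite it rather than assert it.
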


\begin{proof}

This is similar as \cite[Proposition 3.2.(c)]{Ach17}  with a minor change in the first part of  the proof: Notice first that since $X$ is normal, it is then geometrically unibranch. Now if $K$ is finite separable,  the natural map $p:X_K \ra X$ is finite \'etale and surjective, thus  $X$ is normal if only if $X_K$ is. The statement follows then by Corollary \ref{0tran}. If $K$ is the separable closure of $k$, then it can be written as a direct limit of finite separable field extensions of $k$. Therefore, the assertion follows by    \cite[Chapter III,  Lemma 1.16]{MIL} and Corollary \ref{0tran}. The rest of the proof is   the same as    in the proof of \cite[Proposition 3.2.(c)]{Ach17}.
\end{proof}

\section{Further comments  and Examples}

We present here some remarks and examples regarding the different frameworks resulting from Theorem \ref{mainthm}. We begin with its  implication  on Grothendieck's question concerning the surjectivity of the  Brauer map.

\subsection{Brauer groups and Brauer map}

 The Brauer group $\Br(X)$ of a scheme $X$ is  defined as the set of classes of Azumaya $\shO_X$-algebras on $X$ modulo similarity equivalence (see \cite[Chapter IV]{MIL}). The set of   Azumaya algebras  of rank $r^2$ is isomorphic to the \v Cech cohomology set $\check{H}_{\et}^1(X,   \PGL_r(\mathcal{O}_X))$. Non abelian cohomology gives  boundary maps  $$\check{H}_{\et}^1(X,   \PGL_r(\mathcal{O}_X))    \overset{\delta_r}\longrightarrow  \check{H}_{\et}^2(X, \mathbb{G}_{m,X})$$ which  induce an injective homomorphism of groups 
$$\delta: \Br(X) \lra \Br'(X)$$
called the Brauer map, where $\Br'(X) := H_{\et}^2(X, \GG_{m,X})_{\rm tors}$ is the torsion part of the \'etale  cohomology group  $H_{\et}^2(X, \GG_{m,X})$ (cf.  \cite[Proposition 1.4]{Grothendieck:1968}). For a quasi-compact scheme $X$, Grothendieck asked whether the map $\delta$ is surjective, that is when $\Br(X)= \Br'(X)$?  Affirmative answers on this question have been given for a very few cases, where the most important one which is proved by Gabber, is that of schemes with an ample invertible sheaf. Further, Gabber gave the following  technical criterion  to attack the problem (see \cite[ Lemma 1]{GBBR} and \cite[Remark 2.3]{MTD}).
\begin{lemma}
\mylabel{galois}
 Let  $X$ be  a locally noetherian  scheme. Let $\beta \in  H_{\et}^2(X, \mu_{n,X})$ for some integer $n$. If there exists a finite locally free  cover $ f: Y \rightarrow X$ such that $\beta_{|Y}$=0 in $ H_{\et}^2(Y, \mu_{n,Y})$, then $ \beta$ lies in  $\Br(X)$.
\end{lemma}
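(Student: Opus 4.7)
The plan is to use twisted-sheaf descent along $f$ to exhibit the image of $\beta$ in $H_{\et}^2(X,\GG_m)$ as the class of an Azumaya algebra. Through the Kummer sequence $1 \to \mu_{n,X} \to \GG_{m,X} \xrightarrow{(\cdot)^n} \GG_{m,X} \to 1$, the inclusion $\mu_n \hookrightarrow \GG_m$ sends $\beta$ to a class $\gamma \in H_{\et}^2(X,\GG_m)$ with $n\gamma=0$, so $\gamma \in \Br'(X)$ and $\gamma_{|Y}=0$. It then suffices to produce an Azumaya $\shO_X$-algebra whose class under $\delta$ equals $\gamma$.

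I would first interpret $\beta$ as a $\mu_n$-gerbe $\shG \to X$ in the sense of Giraud, so that $\gamma$ is represented by the associated $\GG_m$-gerbe $\shG \wedge^{\mu_n} \GG_m$. The hypothesis $\beta_{|Y}=0$ trivialises this gerbe over $Y$, producing an invertible weight-one twisted sheaf $\shL$ on $\shG \times_X Y$, i.e.\ a $\gamma_{|Y}$-twisted line bundle on $Y$. Since $X$ is locally noetherian, I decompose it into finitely many clopen loci on which the locally constant rank of $f$ equals a fixed integer $d \geq 1$ and argue on each piece separately.

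The decisive step is to push $\shL$ forward. Because $f$ is finite and locally free of rank $d$, the pushforward $f_*\shL$ is a $\gamma$-twisted coherent $\shO_X$-module which is locally free of rank $d$ (finite flat pushforward of a line bundle along a map of constant rank $d$ yields a locally free sheaf of rank $d$, and proper pushforward preserves the twisting weight). Passing to endomorphisms cancels the weight-one twist against its dual, so $\shA := \shEnd_{\shO_X}(f_*\shL)$ is a genuine (untwisted) $\shO_X$-algebra, \'etale-locally isomorphic to $M_d(\shO_X)$, hence Azumaya of rank $d^2$. A direct computation of the $\PGL_d$-cocycle encoded in the local trivializations of $f_*\shL$ identifies the Brauer class of $\shA$ with $\gamma$, which finishes the proof.

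The principal obstacle is making this twisted-sheaf machinery rigorous—in particular, verifying functoriality of the twisting under $f_*$ and the identification $[\shA]=\gamma$ in $H_{\et}^2(X,\GG_m)$. Gabber's own argument avoids gerbes entirely and instead constructs everything by explicit \v{C}ech cocycles on $Y$ and $Y \times_X Y$: the trivialization of $\beta_{|Y}$ yields a $1$-cochain whose coboundary computes the restriction to $Y$ of a $2$-cocycle for $\gamma$, and one uses these data to glue copies of the trivial $\PGL_d$-torsor over $Y$ into an Azumaya algebra on $X$, after which the required rank computation is elementary.
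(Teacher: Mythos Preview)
The paper does not prove this lemma at all: it is stated as a result of Gabber, with a bare citation to \cite[Lemma~1]{GBBR} and \cite[Remark~2.3]{MTD}, and no argument is given. So there is nothing in the paper to compare your proposal against.

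That said, your twisted-sheaf argument is the standard modern repackaging of Gabber's construction and is essentially correct; you even identify the relationship yourself in the final paragraph. One minor slip: ``locally noetherian'' does not by itself let you cut $X$ into \emph{finitely} many clopen pieces of constant rank for $f$, since $X$ may have infinitely many connected components. This is harmless, however, because membership in $\Br(X)$ can be verified componentwise, and on each connected component the rank of $f$ is a fixed positive integer $d$. With that adjustment your outline goes through.
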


Applying Theorem \ref{mainthm} to the \'etale sheaf $\shF=\mu_{n,X}$ for every $n$ invertible in $X$, we get then several classes of schemes for which  $\Br(X)= \Br'(X)$ up to $char(X)$-torsions. In particular, we will recover in the next subsections some results for previous solved cases.

\subsection{Smooth curves and abelian varieties}


If $X$ is  a smooth curve  of genus $>0$ over a field $k$, then by  \cite[Proposition 3.6]{VW} we have  $H_\et^2(\hat X, \shF)=0$  for every  locally constant constructible torsion \'etale sheaf $\shF$ on $X$. Hence, Theorem \ref{mainthm} implies that $\Br(X)=\Br'(X)$ independently from the fact that $X$ is an algebraic $K(\pi,1)$ space as in \cite{MTD}.

Now for  an abelian variety $A$ over a separably closed field $k$, and regarding the definition of pro-finite \'etale covers in \cite{VW} as a relative spectrum, we have

$$\hat A= \underline{\Spec}(\bigoplus_{\alpha} \mathcal{L}^{-1}_{\alpha})$$ where the sum runs over the torsion elements $\alpha \in \Pic(A)$ and $\mathcal{L}_{\alpha}$ is the invertible sheaf corresponding to $\alpha$. For an arbitrary field $k$, the pro-universal cover $\hat A$ is determined by applying the above construction to $A_{\bar k}:= A \times_k \bar k$, where $\bar k$ is a separable closure of $k$. See  \cite[pages 507-508 ]{VW} for the explicit description of those two constructions.

Since abelian varieties have vanishing higher \'etale homotopy groups (see for instance the proof of \cite[Proposition 7.2]{MTD} ), it follows by Remark \ref{alln} that $H_\et^n(\hat A, \shF_{|\hat A})=0$ for every locally constant constructible torsion  \'etale sheaf $\shF$ on $A$ and for all $n \geq 1$. This extends the statement of \cite[Proposition 3.6]{VW} to a  class of higher dimensional varieties.

More generally, Demarche and Szamuely showed \cite[Theorem 1.1]{DS} that for any connected smooth algebraic group $G$ over a
separably closed field $k$ of characteristic $p \geq 0$, we have $\pi^\et_2(G^{{\wedge}(p')},\bar g)=0$, where $G^{{\wedge}(p')}$ is   the completion of the \'etale homotopy type $Et(G)$ with respect to
the class of finite groups of order prime-to-$p$, and the geometric point $\bar g \ra G$ is the unit element of $G(k)$.  In characteristic $0$, this implies in particular by Theorem \ref{mainthm}  that $\Br(G)=\Br'(G)$.

\subsection{Schr\"oer spaces}

It is proven by the author in \cite{MTD} that if $X$ is a Schr\"oer space, that is  a  connected scheme of finite type  over $\CC$ such that  its associated analytic space $X^{an}$ is a topological $K(\pi,  1)$ space, and the  topological fundamental group $\pi^{\rm top}_1(X^{an})$ is good in the sense of Serre (cf. \cite[Definition 3.4]{MTD}), then   $\Br(X)= \Br'(X).$  By the following proposition, one can alternatively apply  Theorem  \ref{mainthm} to get this result  in the case that $\pi_1^{\rm top}(X^{an})$ is finite, 

\begin{proposition}\label{complex}

Let $X$ be a  connected scheme  of finite type over $\CC$, such that $X^{an}$ is a $K(\pi,1)$ space with finite topological fundamental group $\pi_1^{\rm top}(X^{an})$, and let $\hat X$ be the pro-universal cover of $X$. Then  for every locally constant constructible torsion \'etale sheaf $\shF$ on $X$ and for all $n \geq 1$ the natural morphism
$$H_\et^n(X,\shF) \lra H_\et^n(\hat X,\shF_{|\hat X})$$
is zero.

\end{proposition}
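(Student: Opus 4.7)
The plan is to show directly that the codomain $H_\et^n(\hat X,\shF_{|\hat X})$ vanishes for every $n\geq 1$, which forces the natural map of the proposition to be zero; along the way one sees that the same fact drops out of Theorem \ref{mainthm} through Remark \ref{alln}.

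First, since $\pi := \pi_1^{\rm top}(X^{an})$ is finite, the Riemann existence theorem identifies $\pi_1^\et(X,\bar x)$ with the profinite completion $\widehat{\pi}=\pi$, so the \'etale fundamental group of $X$ is itself finite. Consequently the cofiltered system of all finite \'etale covers of $X$ admits a terminal object, namely the finite \'etale Galois cover $Y\to X$ attached to the trivial subgroup of $\pi_1^\et(X,\bar x)$, and therefore $\hat X\simeq Y$ is a genuine finite \'etale scheme over $X$ rather than a strict pro-scheme. Moreover $\pi_1^\et(\hat X)=0$ by construction, so Propositions \ref{reps} and \ref{corrspnd} imply that $\shF_{|\hat X}$ is the constant sheaf with fibre the finite abelian group $\shF_{\bar x}$.

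Next I would pass to the analytic side. By the Riemann existence theorem again, $Y^{an}\to X^{an}$ is the finite topological covering corresponding to the trivial subgroup of $\pi$, i.e.\ the topological universal cover of $X^{an}$. Since $X^{an}$ is a $K(\pi,1)$, this universal cover is simply connected and has vanishing higher homotopy groups (inherited from $X^{an}$ above degree $1$), hence it is weakly contractible. Artin's comparison theorem for torsion coefficients on schemes of finite type over $\CC$ then yields
$$H_\et^n(\hat X,\shF_{|\hat X})\;\simeq\;H^n(Y^{an},\shF_{\bar x})\;=\;0\quad\text{for all }n\geq 1,$$
which is exactly what is required.

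I do not anticipate any serious obstacle. The two essential ingredients are the Riemann existence theorem, used both to turn finiteness of $\pi_1^{\rm top}(X^{an})$ into the assertion that $\hat X$ is a single finite \'etale cover and to identify its analytification with the universal cover of $X^{an}$, and Artin's comparison theorem, used to transfer \'etale cohomology into the singular cohomology of a contractible space. Both apply freely for schemes of finite type over $\CC$, so no geometrically unibranch hypothesis intervenes and the direct argument bypasses Proposition \ref{univetal} altogether. Equivalently, the vanishing just proved realises condition (vi) of Theorem \ref{mainthm} for every $n\geq 2$ in the extended form of Remark \ref{alln}, while $n=1$ is already covered by Lemma \ref{hpro}.
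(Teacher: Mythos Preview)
Your argument is correct and in fact streamlines the paper's own proof considerably. Both proofs rest on the same two pillars --- the Riemann existence theorem and Artin's comparison theorem --- and both ultimately reduce to the weak contractibility of the topological universal cover of $X^{an}$. The difference is that you exploit the finiteness of $\pi_1^{\rm top}(X^{an})$ at the outset to identify $\hat X$ with a single finite \'etale cover $Y$, after which Artin comparison immediately gives $H_\et^n(\hat X,\shF_{|\hat X})\simeq H^n(Y^{an},\shF_{\bar x})=0$; the paper instead keeps $\hat X$ as a pro-object, builds an auxiliary inverse system $(C_i)$ via a Thurston-type argument, assembles a larger commutative diagram involving $Y$, $Y'=\varprojlim Y_i$, and $\hat X$, and then chases through it. Your route is shorter and yields the slightly stronger conclusion that the target group itself vanishes, not merely that the restriction map is zero. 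One small terminological quibble: in the cofiltered category of pointed connected finite \'etale covers the object $Y$ is \emph{initial} rather than terminal (equivalently, cofinal for the inverse limit), though this does not affect the substance of the argument.
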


\begin{proof}

By the Riemann existence theorem \cite[Chapter III,  Lemma 3.14]{MIL}, the functor $$F : (f: Z \ra X) \lra (f^{an}: Z^{an}  \ra X^{an})$$ induces an equivalence of the category of finite \'etale
covers of  $X$ with that of finite topological covers of $X^{an}$. Since $X$ is smooth, then $X^{an}$ is a complex manifold, hence the universal cover $p: C \ra X^{an}$ exists, and it is finite by assumption on $\pi_1^{\rm top}(X^{an})$ which is finite. Following an idea of  Thurston used in the proof of  \cite[ Proposition 13.2.4]{THMAN},  one can find an inverse system $(C_i,f_{ij})_{i \in I}$ of finite connected topological covers of $X^{an}$ with  $C= \varprojlim C_i $. 
Under the above equivalence,  covers $C$ and  $C_i$ correspond respectively to finite \'etale covers $g: Y \ra X$ and $g_i: Y_i \ra X$ with $C= Y^{an}$ and $C_i=Y_i^{an}$ for every $i \in I$. Let  $Y'=\varprojlim Y_i$,  and let $\shF$ be a locally constant constructible torsion \'etale sheaf on $X$. We obtain a natural commutative diagram

\centerline{\xymatrix{ Y^{an}  \ar[d]  \ar[rr]^{p}&  & X^{an} \ar[d] \\
 Y   \ar[r] & Y' \ar[r] &  X   \\
       &  \hat{X} \ar[ru] \ar[u] \ar[lu] &   }}

It gives a commutative diagram of \'etale cohomology groups for all $n \geq 1$

\centerline{\xymatrix{ H^n(X^{an}, \shF_{|X^{an}})   \ar[rr] &  & H^n(Y^{an},  \shF_{|Y^{an}})   \\
H^n_\et(X,  \shF) \ar[rd]  \ar[u]  \ar[r] & H^n_\et(Y',  \shF_{|Y'}) \ar[d] \ar[r] &  H^n_\et(Y,  \shF_{|Y})  \ar[ld]  \ar[u]  \\
       &  H^n_\et(\hat{X},  \shF_{|\hat{X}})    &   }}

Observe first that after replacing $Y$ by $Y''=Y \times_X Z$, where $g: Z \ra X $ is the finite \'etale cover of Remark \ref{loclcnst}, we may assume $\shF$  constant. Now since $X^{an}$ is a $K(\pi,1)$ space, it follows from the fibration  exact sequence of homotopy groups induced by the universal cover $p:  Y^{an} \ra  X^{an}$, that $ Y^{an} $ is weakly contractible, thus  by Hurewicz theorem we get  $\pi_n^{\rm top}(Y^{an})= H_n(Y^{an}, \ZZ)=0$ for all $n \geq 1$. Therefore, the  universal coefficient theorem would imply  that  $H^n(Y^{an},\shF_{|Y^{an}}) = 0$ for all $n \geq 1$. On the other hand, the two upper vertical maps are isomorphisms by Artin comparison theorem. The assertion follows by a simple diagram chasing.
\end{proof}

\subsection{The case of  proper schemes}  

Let $(X,  \bar x)$  be a pointed  smooth,  geometrically irreducible,  quasi-projective variety  over an algebraically closed  field $k$. Following P\'al \cite{PAL}, for every abelian group $A$ and $n \geq 0$ we  consider the \'etale homology groups
$$
H_n(X, A):= H_n(Et(X),  A).
$$

 We have  alternative proofs  for implications     (vii)$\Rightarrow$(ii) and  (v)$\Rightarrow$(i)  of  Theorem \ref{mainthm}. Indeed, for such a variety,  we have by  \cite[Theorem 4.3]{PAL}
$$
   \pi_2^\et(X, \bar{x})  \simeq  \varprojlim H_2( Et(X_i), \hat{ \ZZ})
$$
where  the limit runs over all finite \'etale  covers  $f_i: X_i \rightarrow X$ of $X$,
and  $\hat  \ZZ =  \varprojlim \ZZ /n\ZZ$ is  the profinite completion of $\ZZ$.   Since $Et$ preserves limits, we have   by \cite[Chapter III,  Lemma 1.16]{MIL} 
$$
\pi_2^\et(X,\bar x) \simeq \varprojlim H_2(X_i, \hat{ \ZZ}) \simeq H_2(\hat{X}, \hat{ \ZZ}).
$$

In fact, this holds for any proper scheme over $k$. Indeed, by tracking  P\'al's proof of  \cite[Theorem 4.3]{PAL}, we see that  assumptions on $X$ are needed  to use \cite[Theorem 4.1]{PAL} which can be replaced by the general assertion  of Corollary \ref{0tran}, and they are used  for the \'etale fundamental group $\pi_1^\et(X,\bar x)$ to be topologically finitely generated. However,  Lara-Srinivas-Stix recently \cite{LSS} generalized this  statement by showing that the \'etale fundamental group of all connected proper schemes over $k$ is topologically finitely presented.

On the other hand, for  any  locally constant constructible torsion \'etale sheaf  $\mathcal{F}$ on $X$, the natural morphism $\ZZ \ra \hat \ZZ$ induces a natural structure of $\hat \ZZ$-module on the stalk  $\shF_{\bar x}$. Therefore, by the universal coefficient theorem, we get an isomorphism
 $$H_\et^2(\hat X,\mathcal{F}_{\bar x})   \simeq  \Hom(H_2( \hat X,  \hat \ZZ),  \shF_{\bar x}).$$
By replacing $\ZZ$ by $\hat \ZZ$ in the diagram (\ref{digrm2}), the implication (vii) $ \Rightarrow $ (ii)   follows by using the same argument.
Now for  (v) $ \Rightarrow $ (i), the universal coefficient theorem for cohomology and Lemma  \ref{hpro} show that $H_1(\hat X, \ZZ)= H_2(\hat X, \ZZ)=0$, and by the universal coefficient theorem for homology we get $H_2(\hat{X}, \hat{ \ZZ})=0$.

\begin{acknowledgments}I would like to thank Marc Hoyois from MathOverflow, the proof the Proposition \ref{univetal} is due to him. 
\end{acknowledgments}

\end{document}